\newcommand{\la}[1]{\lambda #1}
\newcommand{\be}{\begin{eqnarray}}
\newcommand{\ee}{\end{eqnarray}}
\newcommand{\bes}{\begin{eqnarray*}}
\newcommand{\ees}{\end{eqnarray*}}
\newtheorem{theorem}{Theorem}
\newtheorem{lemma}{Lemma}
\newtheorem{corollary}{Corollary}
\newtheorem{conjecture}{Conjecture}
\begin{document}

\title
{A Simple Proof of a Conjecture of Simion \footnote{The research was
partially supported by the NSF of Liaoning Province of China.}}
\author {Yi\  Wang\\[5mm]
{\footnotesize\it Department of Applied Mathematics, Dalian
University of
Technology, Dalian  116023}\\[1mm] {\footnotesize\it People's Republic of China}\\[5mm]
E-mail: wangyi@dlut.edu.cn}
\date{Received May 16, 2002}

\maketitle

\begin{abstract}
Simion had a unimodality conjecture concerning the number of lattice
paths in a rectangular grid with the Ferrers diagram of a partition
removed. Hildebrand recently showed the stronger result that these
numbers are log concave. Here we present a simple proof of
Hildebrand's result. \vspace{2mm}

 {\it Key Words}: Partitions;
log-concavity; lattice paths; Ferrers diagram.
\end{abstract}

\section{Introduction}
\hspace*{\parindent} Let $\la=(\la_1,\la_2,\ldots,\la_r)$ be an
integer partition where $\la_1\ge \la_2\ge\cdots\ge\la_r\ge 0$ and
$\la'$ the conjugate of $\la$. Let $R(m,n)$ denote the rectangular
grid with $m$ rows and $n$ columns where $m\ge \la'_1$ and $n\ge
\la_1$. Consider the grid with the Ferrers diagram of $\la$ removed
from the upper left corner of $R(m,n)$. Let $N(m,n,\la)$ denote the
number of paths in $R(m,n)$ such that the path starts at the lower
left corner, the path ends at the upper right-hand corner, and at
each step the path goes up one unit or to the right one unit but
never inside the removed Ferrers diagram of $\la$. It is well known
that there would be $m+n\choose n$ such paths if there were no
Ferrers diagram removed. In \cite{Sim}, Simion proposed a
unimodality conjecture for $N(m,n,\la)$. This conjecture is also
described in \cite{B2,Sag}. The description in here is based on that
in \cite{Sag}.

\begin{conjecture}[Simion]
For each integer $\ell$ and each
partition $\la$, the sequence
$$ N(\la'_1,\la_1+\ell,\la), N(\la'_1+1,\la_1+\ell-1,\la), \ldots,
N(\la'_1+\ell,\la_1,\la)$$ is unimodal.
\end{conjecture}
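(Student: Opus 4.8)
The plan is to recast Simion's statement, in Hildebrand's stronger log-concave form, as a single inequality for lattice-path numbers, and then to prove that inequality by a crossing/uncrossing argument of Lindström--Gessel--Viennot type.

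\textbf{Reformulation.} Put the upper-left corner of $R(m,n)$ at the origin. Then a path counted by $N(m,n,\lambda)$ runs from $(0,-m)$ to $(n,0)$ by unit up- and right-steps, and "never going inside the removed Ferrers diagram" means exactly that the path stays weakly to the south-east of the lower-right border $\partial F$ of the diagram $F$ of $\lambda$ sitting at the origin corner, its cells extending to the right and downward. The point of this normalization is that $F$ and $\partial F$ no longer depend on $m$ or $n$. Writing $j=\lambda'_1+k$ and $L=\lambda_1+\lambda'_1+\ell$, the sequence in the conjecture becomes $b_{\lambda'_1},b_{\lambda'_1+1},\dots,b_{\lambda'_1+\ell}$, where $b_j$ is the number of $F$-avoiding paths from $(0,-j)$ to $(L-j,0)$; all of these paths live in one fixed region, and only the two endpoints slide along the two axes. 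So it suffices to prove $b_{j-1}b_{j+1}\le b_j^{\,2}$ for $\lambda'_1<j<\lambda'_1+\ell$.

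\textbf{Main step.} View $b_{j-1}b_{j+1}$ as counting ordered pairs $(\sigma,\tau)$ of $F$-avoiding paths, with $\sigma\colon(0,-(j-1))\to(L-j+1,0)$ and $\tau\colon(0,-(j+1))\to(L-j-1,0)$. First, $\sigma$ and $\tau$ must meet: the monotone arc $\sigma$ splits the quarter-plane into two pieces, one of which contains the corner and hence all of $F$; the start of $\tau$ lies on the axis below that of $\sigma$, hence in the other piece, while the end of $\tau$ lies on the axis to the left of that of $\sigma$, hence in the first piece, so $\tau$ crosses $\sigma$. Let $X$ be the common lattice point of $\sigma$ and $\tau$ of extremal coordinate-sum (unique, since a monotone path meets each anti-diagonal once), and cut both paths at $X$. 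Now redistribute steps: transfer one up-step from $\tau$'s head to $\sigma$'s head, and one right-step from $\sigma$'s tail to $\tau$'s tail, performing each transfer at the very bottom of the region (for the up-step) or at the far right (for the right-step), that is, strictly below or strictly to the right of $F$ — this is legitimate because $j>\lambda'_1$ and $L-j>\lambda_1$. Reassembling the four modified pieces yields two $F$-avoiding paths from $(0,-j)$ to $(L-j,0)$, i.e. a pair counted by $b_j^{\,2}$. Since the surgery happens away from $F$, $F$-avoidance of each piece is preserved; and since $X$ was the extremal crossing, one can locate $X$ in the reassembled pair, re-cut, and undo the transfers, so $(\sigma,\tau)\mapsto(\cdot,\cdot)$ is injective. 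This gives $b_{j-1}b_{j+1}\le b_j^{\,2}$.

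\textbf{Where the difficulty lies.} The reformulation, the crossing lemma, and the $F$-avoidance of the four pieces are all routine; the real work is the bookkeeping in the redistribution of steps. Because $F$ is fixed in the plane and has no translational symmetry, one cannot simply translate $\tau$ onto $\sigma$, so the operations ``delete the lowest up-step of a head and reinterpret the result as a path starting one unit higher'' and its inverse have to be pinned down so precisely that the forward construction is genuinely reversible, even though the deleted up-step of $\tau$'s head and the inserted up-step of $\sigma$'s head sit at different horizontal positions. Choosing these canonical operations — and likewise for the tails — so that deletion and insertion are mutually inverse and $X$ is recoverable from the output, is the crux; this is essentially a ballot-type matching argument on the step sequences. (One could instead slice every path along a fixed anti-diagonal lying entirely to the south-east of $\partial F$ and reduce $b_j$ to a convolution of a genuinely $F$-dependent factor with an ordinary binomial coefficient, but the direct injection is the cleaner "simple proof".)
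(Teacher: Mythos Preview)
Your approach is entirely different from the paper's, and it has a genuine gap at precisely the point you yourself flag as ``the crux.'' The crossing lemma and the cut at the extremal common point $X$ are fine, but the ``redistribution of steps'' is not shown to be an injection. Concretely: when you delete the lowest up-step from $\tau$'s head and shift the preceding right-segment up by one, you lose the position of that step; when you prepend an up-step to $\sigma$'s head, the new first step is always up, but the old first step of $\sigma$ may already have been up. After the surgery both output paths run from $(0,-j)$ to $(L-j,0)$ and both pass through $X$, but they now share their endpoints, so the extremal common point of the output pair is an endpoint, not $X$, and there is no evident rule for locating $X$ and undoing the cuts. You acknowledge that pinning this down is ``the real work'' and gesture at ``a ballot-type matching argument,'' but you do not carry it out, and it is not routine: the plain Lindstr\"om--Gessel--Viennot swap at $X$ sends the pair to the \emph{wrong} endpoints $(0,-(j\mp1))\to(L-j\mp1,0)$, and repairing the endpoints while keeping the map invertible and $F$-avoiding is exactly the missing idea, not a bookkeeping detail.

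For comparison, the paper avoids bijections entirely. It uses the one-column recursion
\[
N(m,n,\lambda)=\sum_{k=\lambda'_1}^{m} N(k,n-1,\mu),
\]
with $\mu=(\lambda_1-1,\dots,\lambda_r-1)$, to write the relevant sequences as partial sums of sequences that, by induction on $\lambda_1$, already satisfy log-concavity and the auxiliary inequality $N(m,n+1,\lambda)N(m+1,n,\lambda)\le N(m,n,\lambda)N(m+1,n+1,\lambda)$. A two-line $TP_2$ argument (multiply by the upper-triangular all-ones matrix and use that products of $TP_2$ matrices are $TP_2$) shows that partial-summing preserves both properties, and these two combine to give $N(m-1,n+1,\lambda)N(m+1,n-1,\lambda)\le N(m,n,\lambda)^2$. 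This is essentially the ``slicing'' idea you mention parenthetically and then set aside; it turns out to be the clean route.
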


A sequence of positive numbers $x_0,x_1,\ldots,x_{\ell}$ is unimodal
if $x_0\le x_1\le\ldots\le x_k\ge\ldots\ge x_{\ell}$ for some $k$
and is log concave in $i$ if $x_{i-1}x_{i+1}\le x^2_i$ for
$0<i<\ell$. It is well known that a log-concave sequence is also
unimodal. Very recently, Hildebrand~\cite{Hil} showed the following
stronger result.

\begin{theorem}[Hildebrand]\label{thm1}
The sequence in Simion's
conjecture is log concave.
\end{theorem}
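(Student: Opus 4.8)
\section*{Sketch of a proof}

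The plan is to reduce the statement to a count of monotone lattice paths and then to prove each inequality $a_{k-1}a_{k+1}\le a_k^2$ by exhibiting $a_k^2-a_{k-1}a_{k+1}$ as a manifestly nonnegative quantity. First recall the standard reformulation: if the removed Ferrers diagram of $\lambda$ is placed in the corner of the grid that is held fixed by the substitution in Simion's conjecture, then $N(m,n,\lambda)$ is the number of monotone lattice paths from a point $S$ on one side of the grid to a point $E$ on the adjacent side that stay weakly on the far side of the staircase boundary of $\lambda$; equivalently, $N(m,n,\lambda)$ equals the number of partitions $\mu$ with $\lambda\subseteq\mu\subseteq(n^m)$. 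Along Simion's sequence the diagram stays put, the two endpoints $S$ and $E$ each move one unit in opposite directions as the index increases, and all the paths have the same number of steps, namely $\lambda_1+\lambda'_1+\ell$.

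Writing $a_k$ for the $k$-th entry, I would express $a_k$ as a determinant via the Lindström--Gessel--Viennot lemma: the single path avoiding $\lambda$ is encoded in the usual way by a family of non-intersecting paths indexed by the corners of the staircase boundary of $\lambda$ together with the two movable endpoints, giving $a_k=\det M_k$ for a matrix $M_k$ of binomial coefficients. I would then look for an auxiliary matrix $\widehat{M}$ of size $s$, one larger, whose minors $\widehat{M}^{1}_{1},\widehat{M}^{s}_{s},\widehat{M}^{1}_{s},\widehat{M}^{s}_{1}$ (delete the first/last row and the first/last column) have determinants $a_k,a_k,a_{k+1},a_{k-1}$ respectively, and whose two remaining relevant minors are again Lindström--Gessel--Viennot determinants, hence nonnegative. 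The Desnanot--Jacobi (Dodgson condensation) identity
\[
\det\widehat{M}\cdot\det\widehat{M}^{\,1,s}_{\,1,s}\;=\;\det\widehat{M}^{1}_{1}\,\det\widehat{M}^{s}_{s}\;-\;\det\widehat{M}^{1}_{s}\,\det\widehat{M}^{s}_{1}
\]
would then give $a_k^2-a_{k-1}a_{k+1}=\det\widehat{M}\cdot\det\widehat{M}^{\,1,s}_{\,1,s}\ge 0$, the desired log-concavity.

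The step I expect to be the main obstacle is exactly this matching. Because the forbidden diagram is anchored to the grid while the Simion substitution moves both endpoints of the path, the usual shift tricks do not apply verbatim, and one must verify that deleting the first (respectively last) source--sink pair of the augmented family reproduces $a_{k-1},a_k,a_{k+1}$ on the nose, not merely up to a translation that would displace the forbidden region. Concretely, the substitution ``one more row, one fewer column of the ambient box'' has to be shown to correspond, on the path side, to suppressing the source--sink pair attached to a column and introducing the one attached to a row; checking this, and that the two surviving determinants really are honest path counts, is routine bookkeeping once notation is fixed but is where all the care is needed.

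Should the determinantal route prove awkward, there is a purely recursive alternative. Deleting the first row of $\lambda$ yields, after translating back through the reformulation above, a termwise identity presenting Simion's sequence for $(\lambda,\ell)$ as the sum of Simion's sequence for $(\lambda,\ell-1)$ and a truncation of Simion's sequence for $(\lambda^-,\ell+\lambda_1-\lambda_2)$, where $\lambda^-=(\lambda_2,\dots,\lambda_{\lambda'_1})$; one then inducts on $(\lambda'_1,\ell)$ in lexicographic order, with base cases $\lambda=\emptyset$ (binomial coefficients) and $\ell=0$ (a single term). Since a sum of log-concave sequences need not be log-concave, the inductive hypothesis must be strengthened to carry, alongside log-concavity, the bilinear synchronization inequality $u_{k-1}v_{k+1}+u_{k+1}v_{k-1}\le 2u_kv_k$ for the pairs of sequences that occur; granting it, log-concavity of the sum follows formally, and the synchronization inequality — again most naturally established in the lattice-path model — becomes the crux in place of the Dodgson identity.
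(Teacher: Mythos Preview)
Your proposal outlines two routes but completes neither. In the determinantal approach you yourself flag the obstruction: because Simion's substitution moves \emph{both} endpoints while the staircase stays fixed, the augmented Lindstr\"om--Gessel--Viennot configuration whose four corner minors are $a_{k-1},a_k,a_k,a_{k+1}$ is not the naive one, and you do not construct it; until that bookkeeping is actually done there is no proof. In the recursive alternative the gap is sharper: the ``synchronization inequality'' $u_{k-1}v_{k+1}+u_{k+1}v_{k-1}\le 2u_kv_k$ is false for general pairs of log-concave sequences (take $u=(1,2,4)$, $v=(4,2,1)$, where the left side is $17$ and the right side is $8$), so it cannot be carried along as a formal strengthening of the hypothesis---you would have to prove it for the specific pair arising from your row-deletion identity, and you give no mechanism for doing so.

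The paper sidesteps both difficulties by choosing a different recursion and a different strengthening. It strips a \emph{column} from $\lambda$, setting $\mu=(\lambda_1-1,\dots,\lambda_r-1)$, and uses the partial-sum identity
\[
N(m,n,\lambda)=\sum_{k=\lambda'_1}^{m}N(k,n-1,\mu).
\]
The inductive load is not a bilinear synchronization condition but the pair of $TP_2$-type inequalities
\[
N(m,n+1,\lambda)N(m+1,n,\lambda)\le N(m,n,\lambda)N(m+1,n+1,\lambda),\qquad
N(m-1,n,\lambda)N(m+1,n,\lambda)\le N(m,n,\lambda)^2,
\]
and the whole point is that \emph{both} are automatically preserved under passage to partial sums, because the upper-triangular all-ones matrix is $TP_2$ and products of $TP_2$ matrices are $TP_2$. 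These two inequalities then combine algebraically (via one symmetry swap) to give the diagonal log-concavity $N(m-1,n+1,\lambda)N(m+1,n-1,\lambda)\le N(m,n,\lambda)^2$. So the right strengthening is ``log-concave in $m$'' together with the cross inequality in $(m,n)$, and the recursion that makes this strengthening self-propagating is column deletion, inducting on $\lambda_1$ alone rather than on $(\lambda'_1,\ell)$ lexicographically.
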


The key idea behind Hildebrand's proof is to show  \be\label{eq1}
N(m,n+1,\la)N(m+1,n,\la)\le N(m,n,\la)N(m+1,n+1,\la) \ee and
\be\label{eq2} N(m-1,n+1,\la)N(m+1,n+1,\la)\le N^2(m,n+1,\la). \ee
Note that (\ref{eq1}) and (\ref{eq2}) yield \be\label{eq3}
N(m-1,n+1,\la)N(m+1,n,\la)\le N(m,n,\la)N(m,n+1,\la). \ee By
symmetry, this implies \be\label{eq4} N(m+1,n-1,\la)N(m,n+1,\la)\le
N(m,n,\la)N(m+1,n,\la). \ee Further, (\ref{eq3}) and (\ref{eq4})
yield
$$N(m+1,n-1,\la)N(m-1,n+1,\la)\le N^2(m,n,\la),$$ the desired
result. So, to show Theorem \ref{thm1}, it suffices to show
(\ref{eq1}) and (\ref{eq2}).

\section{Proof of (\ref{eq1}) and (\ref{eq2})}
\hspace*{\parindent} A matrix $A$ is said to be totally positive of
order 2 (or a $TP_2$ matrix, for short) if all the minors of order 2
of $A$ have nonnegative determinants. A sequence of positive numbers
$x_0,x_1,x_2,\ldots,x_{\ell}$ is log concave if and only if the
matrix
$$\left(
    \begin{array}{ccccc}
      x_0&x_1&x_2&\cdots&x_{\ell} \\
      &x_0&x_1&\cdots&x_{\ell-1} \\
    \end{array}
  \right)
$$ is $TP_2$ (see, e.g.,
\cite[Proposition 2.5.1]{B1}). The following lemma is a special case
of \cite[Theorem 2.2.1]{B1}.

\begin{lemma}\label{lem1}
The product of two finite $TP_2$ matrices
is also $TP_2$.
\end{lemma}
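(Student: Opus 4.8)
The plan is to derive Lemma~\ref{lem1} directly from the Cauchy--Binet formula for the minors of a matrix product. Suppose $A$ is a $p\times q$ matrix and $B$ is a $q\times r$ matrix, so that $C=AB$ is a $p\times r$ matrix, and suppose both $A$ and $B$ are $TP_2$. For index sets $I\subseteq\{1,\dots,p\}$ and $K\subseteq\{1,\dots,r\}$ with $|I|=|K|=2$, write $C_{I,K}$ for the $2\times2$ submatrix of $C$ with rows indexed by $I$ and columns indexed by $K$, and similarly $A_{I,J}$, $B_{J,K}$ for submatrices of $A$ and $B$. The Cauchy--Binet formula then gives
$$\det C_{I,K}=\sum_{J}\det A_{I,J}\,\det B_{J,K},$$
where $J$ ranges over all $2$-element subsets of $\{1,\dots,q\}$.

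From here the conclusion is immediate. Each $\det A_{I,J}$ is the determinant of a minor of order $2$ of $A$, hence nonnegative because $A$ is $TP_2$; likewise $\det B_{J,K}\ge0$ because $B$ is $TP_2$. Therefore every summand on the right-hand side is a product of two nonnegative numbers, so $\det C_{I,K}\ge0$. Since $I$ and $K$ were arbitrary $2$-element index sets, all minors of order $2$ of $C=AB$ have nonnegative determinant, i.e.\ $C$ is $TP_2$. (If one additionally stipulates that a $TP_2$ matrix have nonnegative entries, this is also clear, since a product of matrices with nonnegative entries has nonnegative entries; the same Cauchy--Binet identity with $|I|=|K|=1$ makes it formally uniform.)

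For completeness I would dispose of the degenerate cases: if $q<2$ the index family over which $J$ runs is empty, so the displayed sum is empty and $\det C_{I,K}=0\ge0$; and if $C$ has fewer than two rows or fewer than two columns there is nothing to verify. There is no substantial obstacle in this argument — the whole content is the Cauchy--Binet expansion, which rewrites a $2\times2$ minor of the product as a nonnegative linear combination of products of $2\times2$ minors of the two factors. The only points requiring care are the bookkeeping with index sets and the tacit hypothesis that $A$ and $B$ have compatible sizes; the word ``finite'' in the statement is there precisely so that the product is an ordinary matrix product and Cauchy--Binet applies verbatim.
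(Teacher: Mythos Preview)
Your proof is correct: the Cauchy--Binet identity for $2\times 2$ minors of a matrix product is exactly the right tool, and your handling of the degenerate cases and the nonnegativity of entries is clean. The paper itself does not prove Lemma~\ref{lem1} at all---it simply records it as a special case of \cite[Theorem~2.2.1]{B1} (the general fact that total positivity of order $r$ is preserved under matrix multiplication)---so your argument supplies a self-contained proof where the paper relies on an external citation. The Cauchy--Binet approach you give is in fact the standard way to establish Brenti's theorem in the $r=2$ case, so in spirit you are spelling out what the reference would contain.
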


\begin{corollary}\label{cor1} Let $a_0,a_1,\ldots,a_{\ell}$ be nonnegative
and $x_0,x_1,\ldots,x_{\ell}$ positive. Denote $A_m=\sum_{i=0}^ma_i$
and $X_m=\sum_{i=0}^mx_i$ for $m=0,1,\ldots,\ell$.
\begin{enumerate}
  \item [\rm (i)] Assume $a_ix_{i+1}\le a_{i+1}x_i$ for all $i$. Then
$A_mX_{m+1}\le A_{m+1}X_m$ for all $m$.
  \item [\rm (ii)] If the sequence $x_0,x_1,\ldots,x_{\ell}$ is log
concave, then so is the sequence $X_0,X_1,\ldots,X_{\ell}$.
\end{enumerate}
\end{corollary}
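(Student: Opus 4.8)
The plan is to deduce both parts from Lemma~\ref{lem1} by recognizing the partial sums $A_m$ and $X_m$ as the entries of a matrix product. Let $U$ be the $(\ell+1)\times(\ell+1)$ upper triangular matrix (rows and columns indexed by $0,\ldots,\ell$) all of whose entries on or above the main diagonal equal $1$; the point of introducing $U$ is that right-multiplication by $U$ sends a row vector to its vector of partial sums, since $(vU)_m=\sum_{k\le m}v_k$. First I would record that $U$ is $TP_2$: a minor of $U$ on rows $i_1<i_2$ and columns $j_1<j_2$ equals $U_{i_1j_1}U_{i_2j_2}-U_{i_1j_2}U_{i_2j_1}$, and if $U_{i_2j_1}=1$ then $i_2\le j_1$ forces all four entries to be $1$ so the minor vanishes, while if $U_{i_2j_1}=0$ the minor equals $U_{i_1j_1}U_{i_2j_2}\ge 0$.

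For (i), note that the hypothesis $a_ix_{i+1}\le a_{i+1}x_i$ together with the positivity of the $x_i$ says the ratios $a_i/x_i$ are nondecreasing, hence $a_ix_j\le a_jx_i$ for all $i\le j$ (this is immediate when some $a_i$ vanishes); equivalently, the $2\times(\ell+1)$ matrix $M$ with first row $(x_0,\ldots,x_\ell)$ and second row $(a_0,\ldots,a_\ell)$ is $TP_2$. By Lemma~\ref{lem1}, $MU$ is $TP_2$. But the rows of $MU$ are $(X_0,\ldots,X_\ell)$ and $(A_0,\ldots,A_\ell)$, so nonnegativity of the minor of $MU$ on columns $m,m+1$ reads $X_mA_{m+1}-X_{m+1}A_m\ge 0$, i.e. $A_mX_{m+1}\le A_{m+1}X_m$.

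For (ii), I would run the same device on the $2\times(\ell+1)$ matrix with first row $(x_0,\ldots,x_\ell)$ and second row $(0,x_0,\ldots,x_{\ell-1})$, which is $TP_2$ exactly because $x_0,\ldots,x_\ell$ is log concave (the characterization quoted just before Lemma~\ref{lem1}). Multiplying on the right by $U$ produces a matrix whose rows are $(X_0,\ldots,X_\ell)$ and $(0,X_0,\ldots,X_{\ell-1})$, again $TP_2$ by Lemma~\ref{lem1}, and reading the same characterization backwards for the positive sequence $X_0,\ldots,X_\ell$ gives its log concavity. Alternatively, (ii) drops out of (i) by choosing $a_0=0$ and $a_i=x_{i-1}$ for $i\ge 1$, since then the hypothesis of (i) is precisely log concavity of $x$ and $A_m=X_{m-1}$.

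There is no genuine obstacle once Lemma~\ref{lem1} is available; the only things needing a little care are the short verification that $U$ is $TP_2$, the passage from the consecutive inequalities $a_ix_{i+1}\le a_{i+1}x_i$ to the full family $a_ix_j\le a_jx_i$, and keeping the index conventions straight (e.g. $X_{-1}=0$) so that the shifted rows in (ii) line up correctly.
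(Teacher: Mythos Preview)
Your approach is essentially identical to the paper's: it too multiplies the $2\times(\ell+1)$ matrices $\begin{pmatrix}x_0&\cdots&x_\ell\\ a_0&\cdots&a_\ell\end{pmatrix}$ and $\begin{pmatrix}x_0&x_1&\cdots&x_\ell\\ 0&x_0&\cdots&x_{\ell-1}\end{pmatrix}$ on the right by the upper-triangular all-ones matrix and invokes Lemma~\ref{lem1}. You simply supply two details the paper leaves implicit (that $U$ is $TP_2$, and that the consecutive inequalities $a_ix_{i+1}\le a_{i+1}x_i$ propagate to all $i\le j$ via monotonicity of $a_i/x_i$), and you add the nice observation that (ii) is the special case $a_i=x_{i-1}$ of (i).
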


\begin{proof}
Note that
$$\left(
    \begin{array}{ccccc}
      x_0&x_1&x_2&\cdots&x_{\ell} \\
      a_0&a_1&a_2&\cdots&a_{\ell} \\
    \end{array}
  \right)
\left(
  \begin{array}{ccccc}
    1&1&1&\cdots&1 \\
    &1&1&\cdots&1 \\
    &&1&\cdots&1 \\
    &&&\ddots&\vdots \\
    &&&&1 \\
  \end{array}
\right)=
\left(
  \begin{array}{ccccc}
    X_0&X_1&X_2&\cdots&X_{\ell} \\
    A_0&A_1&A_2&\cdots&A_{\ell} \\
  \end{array}
\right)
$$ and
$$
\left(
    \begin{array}{ccccc}
      x_0&x_1&x_2&\cdots&x_{\ell} \\
      &x_0&x_1&\cdots&x_{\ell-1} \\
    \end{array}
  \right)
\left(
  \begin{array}{ccccc}
    1&1&1&\cdots&1 \\
    &1&1&\cdots&1 \\
    &&1&\cdots&1 \\
    &&&\ddots&\vdots \\
    &&&&1 \\
  \end{array}
\right)= \left(
  \begin{array}{ccccc}
    X_0&X_1&X_2&\cdots&X_{\ell} \\
    0&X_0&X_1&\cdots&X_{\ell-1}. \\
  \end{array}
\right)$$ The statement follows immediately from Lemma
\ref{lem1}.\end{proof}


We now prove (\ref{eq1}) and (\ref{eq2}) by induction on $\la_1$,
the largest part of $\la$. If $\la_1=0$, i.e., $\la=\emptyset$, then
both (\ref{eq1}) and (\ref{eq2}) are easily verified since
$N(m,n,\la)={m+n\choose n}$, so we proceed to the induction step.
Let $\la_1\ge 1$ and $r=\la_1'$. Denote by $\mu$  the partition
$(\la_1-1,\ldots,\la_r-1)$.
Then
$$N(m,n,\la)=\sum_{k=\la'_1}^mN(k,n-1,\mu).$$
However, the sequence $N(k,n-1,\mu)$ is log concave in $k$ by the
induction hypothesis. Hence $N(m,n,\la)$ is log concave in $m$ by
Corollary \ref{cor1} (ii). This proves (\ref{eq2}). On the other
hand, we have by the induction hypothesis
$$N(k,n,\mu)N(k+1,n-1,\mu)\le N(k+1,n,\mu)N(k,n-1,\mu).$$
Thus by Corollary \ref{cor1} (i),
$$\sum_{k=\la'_1}^mN(k,n,\mu)\sum_{k=\la'_1}^{m+1}N(k,n-1,\mu)
\le \sum_{k=\la'_1}^mN(k,n-1,\mu)\sum_{k=\la'_1}^{m+1}N(k,n,\mu).
$$ This gives (\ref{eq1}).

\section*{Acknowledgements}
\hspace*{\parindent} This work was done when the author held a
postdoctoral fellowship at Nanjing University and supported
partially by NSF of Liaoning Province of China. The author wishes to
thank the anonymous referees for their careful reading and
corrections.

\end{document}